\documentclass{article}
\usepackage{graphicx}

\usepackage[fleqn]{amsmath}

\usepackage{amsthm}
\usepackage{amssymb}
\usepackage{amsbsy}
\usepackage{amsmath}
\usepackage{amsfonts}
\usepackage{mathrsfs}
\usepackage{amsmath}
\usepackage[all]{xy}
\usepackage{amstext}
\usepackage{amscd}
\usepackage[dvips]{epsfig,color}
\usepackage{psfrag}
\usepackage{enumerate}
\usepackage{flafter}
\allowdisplaybreaks

\textwidth174mm
\textheight233mm
\topmargin-1.033cm
\setlength{\oddsidemargin}{-4mm}
\setlength{\evensidemargin}{-4mm}
\setlength{\unitlength}{1pt}

\theoremstyle{plain}
\newtheorem{thm}{Theorem}[section]
\newtheorem{prop}[thm]{Proposition}
\newtheorem{cor}[thm]{Corollary}

\theoremstyle{definition}

\newtheorem{conj}[thm]{Conjecture}

\newtheorem{prob}[thm]{Problem}

\def\4{\mathop{4 \mathrm{f}}\nolimits}

\def\12{\mathop{X_{12}}\nolimits}

\def\Hom{\mathop{\mathrm{Hom}}\nolimits}

\newcommand{\lra}{\longrightarrow}
\newcommand{\ra}{\rightarrow}
\newcommand{\Q}{{\Bbb Q}}
\newcommand{\R}{{\Bbb R}}
\newcommand{\Z}{{\Bbb Z}}

\newcommand{\C}{{\Bbb C}}

\newcommand{\SL}{{\rm SL}}
\newcommand{\CS}{{\rm CS}}
\newcommand{\Li}{{\rm Li}}
\newcommand{\Log}{{\rm Log}}

\begin{document} 

\large 

\begin{center} 
{\bf \Large Reciprocity of the Chern-Simons invariants of 3-manifolds} 
\end{center} 
\begin{center}{Takefumi Nosaka 
}\end{center} 
\begin{abstract}\baselineskip=12pt \noindent 
We pose reciprocity conjectures of the Chern-Simons invariants of 3-manifolds, and discuss some supporting evidence on the conjectures. Especially, we show that the conjectures hold if Galois descent of a certain $K_3$-group is satisfied. 
\end{abstract} 
\begin{center} 

\normalsize 

\baselineskip=11pt 

{\bf Keywords} \\ 

3-manifolds, Chern-Simons invariants, dilogarithm identities,  

\end{center} 

\begin{center} 

\normalsize 

\baselineskip=11pt 

{\bf Subject Codes } \\ 

\ \ \ \ \ \ 57K31, 58J28, 11G55, 19J10, 11G55 \ \ \ 

\end{center}

\large 

\baselineskip=16pt 

\section{Introduction} 

Let us review the Chern-Simons invariant. Let $M $ be an oriented closed 3-manifold, and $G$ be a complex simple Lie group, which is simply connected. Let $\mathrm{ R}_G^{\rm irr}(M) $ be the character variety of $M$, that is, the set of the conjugacy classes of all irreducible representations $ \rho:\pi_1(M) \ra G.$ For $[\rho] \in \mathrm{ R}^{\rm irr}_G(M) $, consider the associated principal $G$-bundle $P \ra M$ with a flat connection $A_{\rho}$, and choose a smooth section $\mathfrak{s}: M\ra P$. Then, as in \cite{CS}, with a choice of a (normalized) Killing form $P : \mathfrak{g}^2 \ra \C$, the Chern-Simons invariant of $\rho$ is defined to be 
\begin{equation}\label{sum22} \CS(\rho)= \int_{M} \mathfrak{s}^*( P ( A_{\rho} \wedge [A_{\rho}, A_{\rho}] )) \in \C/ \Z, \end{equation} 
modulo $\Z$. For example, if $ G= \SL_n(\C)$, then $ P ( A_{\rho} \wedge [A_{\rho}, A_{\rho}] ) $ is the familiar formula $\mathrm{Tr}(A_{\rho} \wedge {\rm d} A_{\rho} + \frac{2}{3 } A_{\rho} \wedge A_{\rho} \wedge A_{\rho} ) / 8\pi^2 $. Since the map $\CS: \mathrm{ R}_G^{\rm irr}(M) \ra \C/\Z $ is known to be locally constant, it can be regarded as a map from $\pi_0(\mathrm{ R}_G^{\rm irr}(M) )$, where $\pi_0(\mathrm{ R}_G^{\rm irr}(M) )$ is the connected components of $\mathrm{ R}_G^{\rm irr}(M) $ and is of finite order. However, in many papers, the Chern-Simons invariant is studied in situations with a fixed $\rho$. For example, if $M$ is hyperbolic and $G=\SL_2(\C)$, there is an associated holonomy $\rho_M : \pi_1(M)\ra G$ such that the quadrupled invariant $4 \CS(\rho_M) $ equals the complex volume of $M$, with relation to the algebraic $K$-theory, Beilinson regulator, $\eta$-invariant, etc.

Meanwhile, motivated by physical predictions from M5-branes in string theory, the sums of some topological invariants with respect to all $\rho \in \pi_0(\mathrm{ R}_G^{\rm irr }(M) ) $ have recently been discussed. For example, it is conjectured that the sum of adjoint Reidemeister torsions (or, generally, some perturbative invariants of odd order) vanishes; see \cite{BGZ, CGK, CQW,Yoon}. Analogously, in this paper, we focus on the sum of the Chern-Simons invariants and discuss the vanishing of the sum. Precisely, the main problem can be described as follows: 
\begin{prob}\label{conj12} 
Is there an integer $c_{G} \in \Z $ such that the vanishing of the following sum holds for any closed 3-manifold $M$? 
\begin{equation}\label{sum} c_G \sum_{[\rho] \in \pi_0( \mathrm{ R}^{\rm irr}_G(M)) } \CS (\rho) \in \mathbb{C}/\Z \end{equation} 
If it does vanish, provide a topological interpretation of $r_{G,M} \in \Z/c_G\Z$ such that  
\begin{equation}\label{sum3 } \sum_{[\rho] \in \pi_0( \mathrm{ R}^{\rm irr}_{G }(M)) } \CS (\rho) = \frac{r_{G,M}}{c_G} \in \C/\Z. \end{equation} 
\end{prob} 

\noindent 
If this problem is multi-directionally proved/improved in terms of, e.g., $K$-theory, differential geometry, or mathematical physics, it might further our understanding of the intersections of the above sums.

In this paper, we restrict ourselves to the case $G=\SL_2(\C) $ and establish a conjecture. 
\begin{conj}\label{conj1} 
If $ G= \SL_2(\C)$ and $| \mathrm{ R}_G^{\rm irr}(M) |<\infty$, then $c_G=24 $ and the sum in \eqref{sum} is zero. 
\end{conj}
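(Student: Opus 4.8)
The plan is to realize the sum \eqref{sum} as the value of the extended dilogarithm regulator on a single class in the Bloch/\,$K_3$-type group of $\Q$, and to use that this group has exponent $24$.

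\emph{Step 1 (homological model for $\CS$).} For an irreducible $\rho:\pi_1(M)\ra\SL_2(\C)$ the fundamental class pushes forward to $\rho_*[M]\in H_3(\SL_2(\C)^{\delta};\Z)$, where $\delta$ denotes the discrete topology. By the Cheeger--Chern--Simons/Dupont--Neumann--Zickert description of the relevant universal class there is an additive homomorphism $\widehat c:H_3(\SL_2(\C)^{\delta};\Z)\ra\C/\Z$, computed on the ($\SL_2$-refined) extended Bloch group $\widehat{\mathcal{B}}(\C)$ by the extended Rogers dilogarithm, with $\CS(\rho)=\widehat c(\rho_*[M])$ in the normalization of \eqref{sum22}. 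Additivity of $\widehat c$ reduces everything to the single class $\xi:=\sum_{[\rho]}\rho_*[M]$, since
\begin{equation*}
\sum_{[\rho]\in\pi_0(\mathrm{R}^{\mathrm{irr}}_{\SL_2(\C)}(M))}\CS(\rho)=\widehat c(\xi).
\end{equation*}

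\emph{Step 2 (finiteness and the Galois action).} The $\SL_2(\C)$-character variety of $M$ is an affine $\Q$-scheme of finite type, cut out by trace relations from a finite presentation of $\pi_1(M)$; when $|\mathrm{R}^{\mathrm{irr}}_{\SL_2(\C)}(M)|<\infty$ it is $0$-dimensional, so every point is $\overline{\Q}$-rational and $\Gamma:=\mathrm{Gal}(\overline{\Q}/\Q)$ permutes $\mathrm{R}^{\mathrm{irr}}_{\SL_2(\C)}(M)$. An irreducible $\rho$ is conjugate into $\SL_2(F_\rho)$, with $F_\rho\subset\overline{\Q}$ its trace field, or into the norm-one group of a quaternion algebra over $F_\rho$ (which does not affect what follows), so $\rho_*[M]$ comes from a class in $H_3(\SL_2(F_\rho)^{\delta};\Z)$, functorially in $F_\rho$; hence $(\rho^\sigma)_*[M]=\sigma_*(\rho_*[M])$ for $\sigma\in\Gamma$, while inner conjugation acts trivially. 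Splitting $\mathrm{R}^{\mathrm{irr}}_{\SL_2(\C)}(M)$ into $\Gamma$-orbits $\mathcal O_1,\dots,\mathcal O_k$, each $\mathcal O_j$ being $\Gamma$-equivariantly identified with $\mathrm{Hom}_\Q(F_{\rho_j},\overline{\Q})$, the partial sums
\begin{equation*}
\xi_j:=\sum_{\rho\in\mathcal O_j}\rho_*[M]=\sum_{\tau:F_{\rho_j}\hookrightarrow\overline{\Q}}\tau_*\bigl((\rho_j)_*[M]\bigr)
\end{equation*}
are $\Gamma$-invariant, and $\xi=\sum_{j=1}^k\xi_j$.

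\emph{Step 3 (descent and torsion bound).} Each $\xi_j$ is a $\Gamma$-fixed element of the $K_3$-type group attached to $\overline{\Q}$, and a natural candidate lift of it to the analogous group over $\Q$ is $\mathrm{cores}_{F_{\rho_j}/\Q}$ applied to the class of $\rho_j$. The standing hypothesis — Galois descent for this $K_3$-group, i.e.\ that the map from the group over $\Q$ onto the $\Gamma$-invariants over $\overline{\Q}$ is onto and compatible with corestriction — then puts each $\xi_j$ in the group over $\Q$. That group is finite cyclic of order dividing $24$ (essentially $K_3^{\mathrm{ind}}(\Q)\cong\Z/24$; cf.\ $\zeta_\Q(-1)=-\tfrac{1}{12}$ together with the $2$-torsion that distinguishes the $\SL_2$-refinement from the $\mathrm{PSL}_2$ one), so $24\,\xi_j=0$ for each $j$, hence $24\,\xi=0$. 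Applying $\widehat c$,
\begin{equation*}
24\sum_{[\rho]\in\pi_0(\mathrm{R}^{\mathrm{irr}}_{\SL_2(\C)}(M))}\CS(\rho)=\widehat c(24\,\xi)=0\ \in\ \C/\Z,
\end{equation*}
which is Conjecture~\ref{conj1}.

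\emph{The main obstacle.} The crux is Step 3: the integral Galois descent needed for $K_3$ is a statement of Quillen--Lichtenbaum type that I do not know how to establish unconditionally, and the point where it is genuinely used is the passage from the geometrically defined $\xi_j$ — a priori only an element of the large group attached to $\C$, respectively $\overline{\Q}$ — to the algebraic corestriction from $F_{\rho_j}$. Secondary technical matters still to be settled are: upgrading the identity $\CS(\rho)=\widehat c(\rho_*[M])$ from discrete faithful holonomies of hyperbolic $M$ to arbitrary irreducible $\rho$, and to the $\SL_2$- rather than $\mathrm{PSL}_2$-level (including representations with image in a quaternion algebra); and checking that $\rho_*[M]$ is truly ``defined over $F_\rho$'', so that $\Gamma$ acts on $\{\rho_*[M]\}_{[\rho]}$ as claimed. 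These are technical rather than substantive, the value $24$ being dictated by the torsion of the $K_3$-group of $\Q$.
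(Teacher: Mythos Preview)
The statement is a \emph{conjecture} in the paper, not a theorem; the paper does not prove it in general but gives a conditional result (Proposition~\ref{key3}) and special cases. Your conditional strategy is essentially the same as the paper's: realize $\CS(\rho)$ through the extended Bloch group $\widehat{\mathcal{B}}(F)\cong K_3^{\rm ind}(F)$, observe that the sum over all $[\rho]$ is Galois-invariant, and invoke Galois descent to land in $K_3^{\rm ind}(\Q)\cong\Z/24$. So on the level of ideas your proposal and the paper agree.

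There is, however, a misplacement of the obstacle. You flag Galois descent for $K_3$ as the ``main obstacle'' and treat the passage of each $\rho$ into $\SL_2$ of a number field (the quaternion issue) as a ``secondary technical matter''. The paper does the opposite: it \emph{cites} the Galois descent $\widehat{\mathcal{B}}(F)^{\mathrm{Gal}(F/\Q)}=\widehat{\mathcal{B}}(\Q)$ as a known theorem of Zickert (for finite $F/\Q$), and instead isolates as the genuine hypothesis that the inclusion $\mathrm{R}^{\rm irr}_{\SL_2(F)}(M)\subset\mathrm{R}^{\rm irr}_{\SL_2(\C)}(M)$ be a Galois-stable bijection for some finite Galois $F/\Q$. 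Since your representations have algebraic traces and any quaternion obstruction splits over a finite extension, one might hope this hypothesis is automatic; but the paper does not claim this, and verifies it only case by case (e.g.\ for surgeries on twist knots). In short, what you call the crux is already available in the literature for finite extensions, and what you dismiss as secondary --- getting all $\rho$ honestly into $\SL_2(F)$ for a single Galois $F$, with the map to $\widehat{\mathcal{B}}(F)$ Galois-equivariant and the $F$- and $\C$-conjugacy classes matching up --- is exactly where the paper stops short of a full proof.
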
 

This paper mainly examines supporting evidence of Conjecture \ref{conj1} as follows: In Section \ref{Sec3}, we show (Proposition \ref{key3}) that Conjecture \ref{conj1} is correct, if $\mathrm{ R}_G^{\rm irr }(M) $ satisfies a Galois descent of $K_3$ shown in \cite{Zic}, which implies that the sum \eqref{sum} factors through $K_3^{\rm ind} (\Q) \cong \Z/24$. As a corollary, we show many examples of hyperbolic 3-manifolds satisfying the conjecture (Theorem \ref{lld4024}). We also examine relations between the conjecture and the dilogarithm function and approximately give supporting evidence of the conjecture (Corollary \ref{ll26g4}). Section \ref{Sec1} proves the conjecture for some non-hyperbolic 3-manifolds, and Section \ref{HG2} proposes another conjecture for hyperbolic 3-manifolds with a torus boundary and discusses further problems.

\subsection{Acknowledgments} 

The author thanks Dongmin Gang, Yuichi Kabaya, Masanori Morishita, Jun Murakami, and Yuji Terashima for their useful comments. He also sincerely expresses his gratitude to Christian Krogager Zickert for suggesting an idea for shortly proving Propositions \ref{key3} and \ref{l33l244}, and for invaluable comments.

\section{Galois action on the sum \eqref{sum}, and dilogarithm identities} 

\label{Sec3} 

In this section, we show the key propositions and give supporting evidence of Conjecture \ref{conj1} from the viewpoint of the Galois descent above.

To begin, we should notice that the imaginary part of Problem \ref{conj12} is almost clear: 

\begin{prop}\label{ll24} The imaginary part of the sum \eqref{sum} is zero. 
\end{prop}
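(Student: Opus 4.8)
The plan is to use the symmetry of the character variety $\mathrm{R}^{\mathrm{irr}}_{\SL_2(\C)}(M)$ under complex conjugation of representations, together with the fact that the Chern--Simons density of the conjugate representation is the complex conjugate of the original density.

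First I would record how $\CS$ behaves under conjugation. For $[\rho]\in\mathrm{R}^{\mathrm{irr}}_{\SL_2(\C)}(M)$, let $\bar\rho$ be the representation obtained by conjugating every matrix entry of $\rho$. Then $\bar\rho$ is again irreducible (an invariant line for $\bar\rho$ conjugates to an invariant line for $\rho$), its conjugacy class depends only on that of $\rho$ since $\overline{g\rho g^{-1}}=\bar g\,\bar\rho\,\bar g^{-1}$, and $\iota\colon[\rho]\mapsto[\bar\rho]$ is a continuous involution of $\mathrm{R}^{\mathrm{irr}}_{\SL_2(\C)}(M)$, hence permutes the set $\pi_0$ of its connected components. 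A flat connection with holonomy $\bar\rho$ may be taken to be the entrywise complex conjugate $\overline{A_\rho}$ of a flat connection $A_\rho$ with holonomy $\rho$; since $\mathrm{d}$, the wedge product, the trace, and the constant $8\pi^2$ all commute with complex conjugation, the $\SL_n(\C)$ formula recalled after \eqref{sum22} shows that the Chern--Simons density of $\bar\rho$ is the complex conjugate of that of $\rho$, and integrating over $M$ gives
\[ \CS(\bar\rho)=\overline{\CS(\rho)}\in\C/\Z. \]
(For a general $G$ one would use the anti-holomorphic automorphism attached to a compact real form in place of entrywise conjugation; the same reality computation applies, the normalized Killing form $P$ being real.)

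Next I would sum. Set $S:=\sum_{[\rho]\in\pi_0}\CS(\rho)\in\C/\Z$. Reindexing the sum by the bijection $[\rho]\mapsto[\bar\rho]$ of $\pi_0$ and using the previous step,
\[ \overline{S}=\sum_{[\rho]\in\pi_0}\overline{\CS(\rho)}=\sum_{[\rho]\in\pi_0}\CS(\bar\rho)=S\in\C/\Z. \]
Thus any representative $a+bi$ of $S$ satisfies $2bi\in\Z$; a purely imaginary integer is $0$, so $b=\Im S=0$, and multiplying by $c_G$ does not change this.

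The argument is entirely formal, so I do not expect a genuine obstacle; the only point that needs care is the identification of the flat connection of $\bar\rho$ with $\overline{A_\rho}$ --- equivalently, the reality of the chosen normalization of $P$ --- which for the convention of \cite{CS} is immediate from the explicit $\SL_n(\C)$ formula. I would also emphasize that the vanishing is a genuine cancellation rather than a term-by-term phenomenon: for a closed hyperbolic $M$ the geometric representation $\rho_0$ has $\Im\CS(\rho_0)\neq0$, with $\iota$ interchanging $[\rho_0]$ and $[\bar\rho_0]$.
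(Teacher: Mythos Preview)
Your proof is correct and follows essentially the same approach as the paper: both use the involution $[\rho]\mapsto[\bar\rho]$ on $\pi_0(\mathrm{R}^{\rm irr}_G(M))$ together with $\CS(\bar\rho)=\overline{\CS(\rho)}$ (equivalently $\Im\CS(\bar\rho)=-\Im\CS(\rho)$), which the paper states via $\overline{A_\rho}=A_{\bar\rho}$. The only cosmetic difference is that the paper explicitly splits $\pi_0$ into pairs $\{[\rho_i],[\overline{\rho_i}]\}$ and fixed classes $[\rho'_j]$, while you package the same cancellation as the single identity $\overline{S}=S$ in $\C/\Z$.
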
 

\begin{proof} For a homomorphism $\rho: \pi_1(M) \ra G$, we denote by $\bar{\rho}$ the conjugate representation. We can choose representatives $ \rho_1, \dots , \rho_m ,\overline{\rho_1}, \dots , \overline{\rho_m}, \rho'_1 ,\dots, \rho_n' $ of $ \pi_0( \mathrm{ R}^{\rm irr}_{G }(M) ) $ such that $ [\rho'_i]= [\overline{\rho'_i}] \in \pi_0( \mathrm{ R}^{\rm irr}_{G }(M) ) . $ If $A_{\rho}$ is a flat connection associated with $\rho$, the conjugate $\overline{A_{\rho}}$ equals $A_{\overline{\rho}}$. Thus, $\mathrm{Im}(\CS(\rho)) = - \mathrm{Im}(\CS(\overline{\rho})) $ by definition. In particular, $\mathrm{Im}(\CS(\rho_i)) = - \mathrm{Im}(\CS(\overline{\rho_i})) $ and $\mathrm{Im}(\CS(\rho_j')) =0$. Hence, the imaginary part of the sum \eqref{sum} is zero as required. 
\end{proof} 

In conclusion, we shall focus on only the real part of the Chern-Simons invariants. For simplicity, this paper focuses on the case $G=\SL_2(\mathbb{C})$. From the viewpoint of Galois groups, let us give a brief proof of Conjecture \ref{conj1} in some situations. For a subfield, $F \subset \C$, let $\mathrm{ R}_{\SL_2(F)}^{\rm irr}(M) $ be the set of the conjugacy classes of all irreducible representations $\pi_1(M) \ra \SL_2(F) .$ 

\begin{prop}\label{key3}Let $G=\SL_2(\mathbb{C})$. 
Let $F/\Q$ be a Galois extension of finite degree with an embedding $F \hookrightarrow \C$. Assume that the inclusion $\mathrm{ R}_{\SL_2(F)}^{\rm irr}(M) \subset \mathrm{ R}_{\SL_2(\C)}^{\rm irr}(M) $ is bijective as a finite set, and is closed under the action of the Galois group $\mathrm{Gal}(F/\Q)$. Then, Conjecture \ref{conj1} is true. 
\end{prop}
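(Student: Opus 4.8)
The plan is to exploit the fact that, once all irreducible representations factor through $\SL_2(F)$ for a number field $F$ Galois over $\Q$, the Chern-Simons invariant becomes an algebraic-$K$-theory class on which $\mathrm{Gal}(F/\Q)$ acts, and the Galois-closedness hypothesis forces the sum to be Galois-invariant, hence to live in the fixed part, which is $K_3^{\mathrm{ind}}(\Q)\cong\Z/24$. Concretely: first I would recall that for a representation $\rho:\pi_1(M)\ra\SL_2(F)$ one has a well-defined class $[\rho]\in K_3^{\mathrm{ind}}(F)\otimes\Q$ (or better, an honest class in a suitable $K_3$ or Bloch group) whose image under the Borel/Beilinson-type regulator recovers $\CS(\rho)$ up to the normalization constant; this is the content alluded to in the introduction via the relation of $4\CS(\rho_M)$ to algebraic $K$-theory. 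The key input is the reference \cite{Zic} together with the stated Galois descent: that $\CS$ factors through $K_3$ and that $\sum_{[\rho]}\CS(\rho)$ lands in $K_3^{\mathrm{ind}}(\Q)\cong\Z/24$.

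The main steps, in order, are: (i) For each $[\rho]\in\mathrm{R}^{\mathrm{irr}}_{\SL_2(F)}(M)$ produce the associated $K_3$-class $c(\rho)$, so that $\CS(\rho)$ is the image of $c(\rho)$ under the regulator map $r_F:K_3(F)\ra\C/\Z$ (appropriately normalized so the constant is $24$, matching $c_G=24$). (ii) Observe that $\mathrm{Gal}(F/\Q)$ acts on $\mathrm{R}^{\mathrm{irr}}_{\SL_2(F)}(M)$ by post-composition $\rho\mapsto\sigma\circ\rho$, and by hypothesis this action permutes the finite set $\pi_0(\mathrm{R}^{\mathrm{irr}}_{\SL_2(\C)}(M))$; hence $\Sigma:=\sum_{[\rho]}c(\rho)\in K_3(F)$ is fixed by $\mathrm{Gal}(F/\Q)$. (iii) Apply Galois descent for $K_3^{\mathrm{ind}}$ along $F/\Q$ (the statement of \cite{Zic}) to conclude $\Sigma$ comes from a class in $K_3^{\mathrm{ind}}(\Q)$, possibly after the harmless torsion-order adjustment absorbed into $c_G$. (iv) Since $K_3^{\mathrm{ind}}(\Q)\cong\Z/24$, we get $24\,\Sigma=0$, and applying the regulator, $24\sum_{[\rho]}\CS(\rho)=0$ in $\C/\Z$; combined with Proposition \ref{ll24} (imaginary part already vanishes) this is exactly the assertion of Conjecture \ref{conj1}. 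One must also check that the regulator of the nonzero generator of $K_3^{\mathrm{ind}}(\Q)$ is genuinely $\tfrac{1}{24}$ (not $0$), so that $c_G=24$ is sharp and not merely an upper bound; this is classical (Borel regulator / value of $\zeta_\Q$, or the explicit Bloch-group computation).

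The hard part will be step (i): attaching to an arbitrary irreducible $\rho:\pi_1(M)\ra\SL_2(F)$ a canonical element of $K_3(F)$ (or $K_3^{\mathrm{ind}}$, or the Bloch group $\mathcal{B}(F)$) in a way that is (a) natural under the $\mathrm{Gal}(F/\Q)$-action, so that $\sigma_*c(\rho)=c(\sigma\circ\rho)$, and (b) compatible with the Chern-Simons invariant through the regulator with the correct constant. For hyperbolic $M$ with the holonomy this is the classical dilogarithmic/Bloch-invariant construction (Neumann-Yang), but for a general closed $M$ and a general irreducible $\SL_2(F)$-representation one needs the construction of \cite{Zic} (fundamental class in a relative homology / Bloch-type group built from a triangulation or from group homology $H_3(\SL_2(F))$). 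I would cite this as the enabling result rather than reprove it; the genuinely new content of the proposition is the elementary but crucial observation that the Galois-closedness hypothesis converts the individual (transcendental, field-dependent) invariants into a single Galois-fixed $K_3$-class, at which point $\Z/24$ is forced. A secondary subtlety is the treatment of the self-conjugate and, more generally, the non-faithful or reducible-on-restriction cases, and of representations whose image lies in a proper subfield — but these only make the $K_3$-class live in a smaller group mapping to $K_3^{\mathrm{ind}}(\Q)$, so they cause no difficulty.
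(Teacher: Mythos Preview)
Your proposal is correct and follows essentially the same route as the paper: factor $\CS$ through Zickert's extended Bloch group $\widehat{\mathcal{B}}(F)\cong K_3^{\rm ind}(F)$, use Galois-equivariance of the assignment $\rho\mapsto c(\rho)$ together with the closure hypothesis to see that the sum is $\mathrm{Gal}(F/\Q)$-fixed, invoke Galois descent from \cite{Zic} to land in $K_3^{\rm ind}(\Q)\cong\Z/24$, and conclude. Your extra remarks on sharpness of $c_G=24$ and on Proposition \ref{ll24} are unnecessary here (the argument already kills the full sum in $\C/\Z$, not just its real part), but they do no harm.
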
 

\begin{proof}  This short proof is essentially due to Zickert \cite{Zic2} in a private communication. Zickert \cite{Zic} defines ``the extended Bloch group $\widehat{\mathcal{B}}(F)$" such that the Chern-Simons invariant can be described as the composite map $\mathrm{ R}_{\SL_2(F)}^{\rm irr}(M) \ra \widehat{\mathcal{B}}(F) \stackrel{\widehat{L}}{\lra} \C/\Z $. Here, it is shown that $\widehat{L}$ is a homomorphism, and the former map is equivariant under the action of $\mathrm{Gal}(F/\Q)$, and that $\widehat{\mathcal{B}}(F) $ is isomorphic to the indecomposable part of the algebraic $K$-group $K_3^{\rm ind}(F)$; see Section 5 and Theorem 7.13 in \cite{Zic}. By assumption, the sum \eqref{sum} lies in the invariant subgroup $ \widehat{\mathcal{B}}(F)^{ \mathrm{Gal}(F/\Q)} $. Since $ \widehat{\mathcal{B}}(F)^{ \mathrm{Gal}(F/\Q)} = \widehat{\mathcal{B}}(\Q)$ is a Galois descent (\cite[Corollary 7.15]{Zic}) and $K_3^{\rm ind}(\Q) \cong \Z/24 $ is well-known to be true, the sum \eqref{sum} with $c_G=\Z/24$ vanishes.  
\end{proof}

Now let us examine some examples. For a knot $K$ in the 3-sphere and $r=p/q \in \Q $, let $ M_{p/q} (K)$ be the closed 3-manifold obtained by the $(p/q)$-surgery along $K$. 

\begin{thm}\label{lld4024} 
Let $K$ be a twist knot. Suppose that $p$ is even and relatively prime to $q$. Then, Conjecture \ref{conj1} holds in the case where $M$ is $ M_{p/q} (K)$. 
\end{thm}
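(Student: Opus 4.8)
The plan is to verify the hypotheses of Proposition \ref{key3} for the family $M = M_{p/q}(K)$ with $K$ a twist knot, $p$ even and $\gcd(p,q)=1$. Concretely, I need to produce a number field $F$ which is Galois over $\Q$ of finite degree, with an embedding $F \hookrightarrow \C$, such that (i) every irreducible $\SL_2(\C)$-character of $\pi_1(M)$ is realized over $F$, so that $\mathrm{ R}_{\SL_2(F)}^{\rm irr}(M) \to \mathrm{ R}_{\SL_2(\C)}^{\rm irr}(M)$ is a bijection, and (ii) this set is stable under $\mathrm{Gal}(F/\Q)$. Once these two points are established, Proposition \ref{key3} applies verbatim and Conjecture \ref{conj1} follows for $M$. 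I should also check along the way that $|\mathrm{ R}_{\SL_2(\C)}^{\rm irr}(M)| < \infty$, which is needed even to state the conjecture; for surgeries on a small knot this holds for all but finitely many slopes, and in fact one expects it for every slope here, but a clean argument is to invoke that twist knots are small (their complements contain no closed essential surface), so Dehn filling yields only finitely many characters.

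The first step is to recall the explicit description of the character variety. For a twist knot $K$, the $\SL_2(\C)$-character variety of the knot group is a plane curve cut out by a single polynomial (the ``A-polynomial'' picture, or the Riley-type polynomial in the trace coordinates $x = \tr(\text{meridian})$, $y = \tr(\text{longitude})$ or in the two meridian traces of a two-bridge presentation). Since $K$ is two-bridge, $\pi_1(S^3 \setminus K)$ has a presentation with two meridian generators, and the irreducible characters are parametrized by the vanishing of the Riley polynomial $\phi_K(s,u) \in \Z[s,u]$, which has \emph{integer} coefficients. Performing $(p/q)$-surgery imposes one additional polynomial equation (also with $\Z$ coefficients, coming from the trace of the surgery word $\mu^p \lambda^q$ being $\pm 2$ and the eigenvalue relation). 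Thus the characters of $\pi_1(M)$ form the zero-dimensional variety $V$ cut out over $\Q$ by two polynomials with rational coefficients. The second step: take $F$ to be the Galois closure over $\Q$ of the compositum of all the coordinate fields $\Q(P)$ as $P$ ranges over the finitely many points of $V(\bar\Q)$; equivalently, $F$ is the splitting field of the relevant resultant/elimination polynomial. By construction $F/\Q$ is Galois and finite, every point of $V$ has coordinates in $F$, and $\mathrm{Gal}(F/\Q)$ permutes the points of $V$ because $V$ is defined over $\Q$. Choosing any embedding $F \hookrightarrow \C$, the natural map $\mathrm{ R}_{\SL_2(F)}^{\rm irr}(M) \to \mathrm{ R}_{\SL_2(\C)}^{\rm irr}(M)$ is then surjective (each character is defined over $F$, and a character defined over a field can be realized by a representation into $\SL_2$ of that field or a quadratic extension — here one should be slightly careful and, if necessary, enlarge $F$ by adjoining the finitely many square roots needed to realize each character by an actual representation, keeping $F$ Galois); injectivity is automatic; and Galois-stability of the image follows from Galois-stability of $V$.

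The main obstacle I expect is the representation-theoretic subtlety in point (i): a point of the \emph{character} variety defined over $F$ need not be the character of a \emph{representation} valued in $\SL_2(F)$ — there can be a quaternionic obstruction, so one may only get a representation into $\SL_2$ of a quaternion algebra over $F$, or into $\SL_2(F')$ for a quadratic extension $F'/F$. The fix is to absorb these finitely many quadratic extensions into $F$ from the start (and re-take the Galois closure), which is harmless since Proposition \ref{key3} only needs \emph{some} finite Galois $F$ with the bijectivity and stability properties; but one must make sure that enlarging $F$ does not destroy the bijection $\mathrm{ R}_{\SL_2(F)}^{\rm irr}(M) \to \mathrm{ R}_{\SL_2(\C)}^{\rm irr}(M)$ — it does not, because the target is fixed and the map stays surjective and injective. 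A secondary, more mundane obstacle is confirming that for \emph{every} admissible slope (not just all but finitely many) the surgered manifold has a finite irreducible character variety and that the evenness of $p$ plays no hidden role beyond ensuring $M$ is the manifold intended; here the hypothesis ``$p$ even'' is presumably what guarantees, via the two-bridge structure, that the surgery description and the elimination ideal behave as claimed, and I would cite the known computations of $A$-polynomials and character varieties of twist knots (e.g.\ Hoste–Shanahan) rather than redo them. Modulo these points, the theorem reduces cleanly to Proposition \ref{key3}.
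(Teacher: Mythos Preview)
Your overall strategy---verify the hypotheses of Proposition \ref{key3} by producing a finite Galois $F/\Q$ over which every irreducible character of $\pi_1(M)$ is realized and on which Galois acts---is the same as the paper's. The execution differs substantially. The paper does not argue abstractly from ``the character variety is a $\Q$-scheme''; it quotes Yoon's explicit parametrization of $\mathrm{R}_{\SL_2(\C)}^{\rm irr}(M_{p/q}(K))$ by pairs $(m,z)$ satisfying two Laurent polynomials $F(m,z)=0$ and $m^{p}E_{\gamma}(m,z)^{q}=1$ with integer coefficients, together with explicit matrices for $\rho(a),\rho(b)$ in terms of $(m,z)$. The hypothesis that $p$ is even is used concretely: since $E_{\gamma}$ involves only even powers of $m$, evenness of $p$ makes the surgery equation a relation in $m^{2}$ and $z$, so both equations live in $\Q[m^{\pm 2},z]$; one then solves for $z$ and $m^{2}$ over an explicit number field and adjoins $m=(m^{2})^{1/2}$ to obtain $F$. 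The explicit matrix form already supplies a Galois-stable family of \emph{representations}, so the quaternion obstruction you flag never arises.

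You should notice that your abstract argument uses neither the twist-knot hypothesis nor $p$ even: as written it would place \emph{every} closed $M$ with finite $\mathrm{R}_{\SL_2(\C)}^{\rm irr}(M)$ under Proposition \ref{key3} and hence prove Conjecture \ref{conj1} outright. The paper does not claim this, and your own hedging (``$p$ even is presumably what guarantees\ldots'') shows you feel the tension. One place your sketch is actually loose is the assertion that injectivity of $\mathrm{R}_{\SL_2(F)}^{\rm irr}(M)\to\mathrm{R}_{\SL_2(\C)}^{\rm irr}(M)$ is ``automatic'': if the left-hand side means $\SL_2(F)$-conjugacy classes, two absolutely irreducible $F$-representations with equal character are $\GL_2(F)$-conjugate by Skolem--Noether but need not be $\SL_2(F)$-conjugate, so the map can fail to be injective and the transported sum over $F$ need not match the sum in the conjecture. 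The paper's explicit normal form for $\rho$ sidesteps exactly this issue. Whether your route can be repaired to give the general statement is an interesting question, but as a proof of \emph{this} theorem the paper's concrete construction via Yoon is what makes the hypotheses do visible work.
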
 

\begin{proof}  
We may check that $ M_{p/q} (K)$ satisfies the condition in Proposition \ref{key3}. Let $n \in \Z$ with $n\neq 0$ such that $K$ is the twist knot having $|n|$ right-handed half twists (left-handed, if $n$ is negative). Then, the fundamental group of $S^3 \setminus K$ has a presentation, 
\[ \pi_1( S^3 \setminus K) = \langle a, b \mid w^n a = bw^n \rangle , \ \ \ \textrm{where } w = ba^{-1} b^{-1}a. \]  
From \cite{Yoon}, let us review explicitly the set $\mathrm{ R}_{\SL_2(\C)}^{\rm irr}(M)$. Take the Chebyshev polynomials $S_k(z) \in \Z[z]$ defined by $S_{k+1}(z) = z S_k(z)- S_{k-1}(z)$ for all $k \in \Z$ inductively, where the initial conditions are defined by $S_0(z) = 0, S_1(z) = 1.$ As in \cite{Yoon}, we write $S_k(z)$ simply as $S_k$ for short. From Remarks 6 and 7 in \cite{Yoon}, take the Laurent polynomials of the form, 
\[ F(m, z) = S_n(S_n - S_{n-1})(m^2 + m^{-2}) -z S_n(S_n- S_{n-1}) + 1, \] 
\[ E_{\gamma} (m,z) = -(z - 2)S^2_n m^4 - (z - 2)(S_n - S_{n-1} )m^2 + (S_n- S_{n-1} )^2 \in \Z[m^{\pm 1},z^{\pm 1 }] . \] 
Then, as shown in \S 2.2 in \cite{Yoon}, the set $\mathrm{ R}_{\SL_2( \C)}^{\rm irr}(M)$ is in a 1:1-correspondence with  
\begin{equation}\label{pp44} \{ (m,z) \in \mathbb{C}^{\times} \times \C \mid F(m,z)=0, m^{p} E_{\gamma} (m,z)^q =1\}_{ (m,z) \sim (m^{-1},z)} .\end{equation} 
Here, for $(m,z)$ in this set, the associated representation $\rho : \pi_1(M) \ra SL_2(\C)$ is given by 
\[ \rho(a) = \begin{pmatrix} 
m& 1 \\ 
0 & m^{-1} \\ 
\end{pmatrix} , \ \ \ \rho(b)= \begin{pmatrix} 
m & 0 \\ 
-z& m^{-1} \\ 
\end{pmatrix} \] 
Since $p $ is even, the equation $F(m,z)=0$ implies that $m^2 = e \pm \sqrt{ g}$ and the equation $ m^{p} E_{\gamma} (m,z)^q =1$ reduces to $ h m^2 -k =0$ for some rational functions $e,g,h,k \in \Q(z)$. Thus, $ h m^2 -k = h ( e \pm \sqrt{ g}) -k = 0$ is equivalent to $\pm \sqrt{ g} = k/h -e$: that is, $ g= ( k/h -e )^2 $. Let $f (z)\in \Q[z^{\pm 1}]$ be a numerator of the common denominator of $ g- ( k/h -e )^2. $ Let $E/\Q$ be the field extension by $f(z),\sqrt{ g}$, and $F/E$ be that by $(e \pm \sqrt{ g})^{1/2}$. Then, by construction, the number field $F$ satisfies the condition in Proposition \ref{key3}, as desired.  
\end{proof}

As an application of Theorem \ref{lld4024}, we can point to a non-trivial identity of the dilogarithm function (Corollary \ref{ll26g4} below). Let $K$ be the figure eight knot, for simplicity. From \cite{HM}, we review the resulting computation of the Chern-Simons invariants from the viewpoint of saddle point methods. Consider the classical dilogarithm $\Li_2(z)= - \int_{0}^z \frac{\Log(1-t)}{t} {\rm d}t$. The codomain is $\C/\pi^2 \Z$. Define the (potential) function $\tilde{V}$ on the area $\{ (z,w) \in \C^2 \mid zw \neq 0 \}$ by 
\[ \tilde{V}(z,w):= - \mathrm{Li}_2(zw) + \mathrm{Li}_2(z/w)+ \frac{p}{4}( \Log z)^2 - \Log z \Log w \in \C/\pi^2 \Z ,\] 
and the following equation in \cite[(4.2)]{HM}: 
\begin{equation}\label{sum29} \left\{ 
\begin{array}{l} 
w= \frac{z+z^{p/2}}{z^{p/2} z+1 } ,\\ 
z^2 - ( \frac{z+z^{p/2}}{z^{p/2} z+1 } +1 +\frac{1+z^{p/2}z }{z^{p/2} +z } )z +1 =0. 
\end{array} 
\right. \end{equation} 
If $p$ is even and $|p|\geq 6$, it is shown that there is a bijection, 
\[\mathcal{B}: R_G^{\rm irr}(M_{p/1}(K) ) \stackrel{1:1}{\longleftrightarrow} \{ (z,w) \in \C^2 \mid \eqref{sum29}\textrm{ is satisfied, and }\mathrm{Im}(z) \geq 0 \} . \] 
Let $(\zeta_1, \omega_1),\dots, (\zeta_p, \omega_p) $ be the solutions of \eqref{sum29}. For $i \leq p$, choose integers $c_i,d_i \in \Z$ so that 
\[ \frac{\partial \tilde{V}}{ \partial z}(\zeta_i, \omega_i)+ 2 \pi \sqrt{-1}\frac{c_{i}}{\zeta_i}= 
\frac{\partial \tilde{V}}{ \partial w}(\zeta_i, \omega_i)+ 2 \pi \sqrt{-1} \frac{d_i }{\omega_i} =0. \] 
Then, as shown in \cite[Remark 5.7]{HM} (see also \cite{Oht2} for mathematical estimates on the saddle point method), the Chern-Simons invariant can be computed as follows: 
\[ 4 \mathrm{CS}( \mathcal{B}^{-1} (\zeta_i, \omega_i))= \tilde{V}(\zeta_i, \omega_i) + 2\pi \sqrt{-1} ( c_{i} \log \zeta_i + d_{i} \log \omega_i ). \] 
\begin{cor}\label{ll26g4} 
Suppose that $p$ is even and $|p| \geq 6$. Then,  
\[ \frac{6}{\pi^2 } \sum_{i=1}^p \tilde{V}(\zeta_i, \omega_i) + 2\pi \sqrt{-1} ( c_{i} \log \zeta_i + d_{i} \log \omega_i ) = 0 \in \C/\Z.\] 
\end{cor}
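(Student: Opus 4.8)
The plan is to read off Corollary~\ref{ll26g4} from Theorem~\ref{lld4024} by feeding it into the explicit Chern--Simons formula of \cite{HM} recalled just above. Since the figure-eight knot $K$ is a twist knot, $p$ is even, and $\gcd(p,1)=1$, Theorem~\ref{lld4024} applies to $M=M_{p/1}(K)$ (the extra hypothesis $|p|\ge 6$ is needed only in order to invoke the bijection $\mathcal{B}$). Hence Conjecture~\ref{conj1} holds for $M_{p/1}(K)$: the set $\mathrm{R}_G^{\rm irr}(M_{p/1}(K))$ is finite, $c_G=24$, and
\[ 24\!\!\sum_{[\rho]\in\pi_0(\mathrm{R}_G^{\rm irr}(M_{p/1}(K)))}\!\!\!\CS(\rho)=0\in\C/\Z,\qquad\text{i.e.}\qquad\sum_{[\rho]}\CS(\rho)\in\frac{1}{24}\Z. \]

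Next I would unwind the formula of \cite[Remark 5.7]{HM} along $\mathcal{B}$. That formula computes $4\,\CS(\mathcal{B}^{-1}(\zeta_i,\omega_i))$ as $\tilde V(\zeta_i,\omega_i)+2\pi\sqrt{-1}(c_i\log\zeta_i+d_i\log\omega_i)$; rewriting it in the $\C/\Z$-normalization of Conjecture~\ref{conj1} absorbs a factor $\pi^2$, giving, for each solution $(\zeta_i,\omega_i)$ of \eqref{sum29} in the domain of $\mathcal{B}^{-1}$,
\[ 4\pi^2\,\CS\bigl(\mathcal{B}^{-1}(\zeta_i,\omega_i)\bigr)=\tilde V(\zeta_i,\omega_i)+2\pi\sqrt{-1}\,\bigl(c_i\log\zeta_i+d_i\log\omega_i\bigr)\in\C/\pi^2\Z, \]
a bona fide identity modulo $\pi^2\Z$ (the logarithmic branch ambiguities $(2\pi\sqrt{-1})^2\Z$ land in $\pi^2\Z$). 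Summing over $i=1,\dots,p$ and using that $\mathcal{B}$ identifies this index set with $\mathrm{R}_G^{\rm irr}(M_{p/1}(K))$, the sum appearing in Corollary~\ref{ll26g4} (before the prefactor) equals $4\pi^2\sum_{[\rho]}\CS(\rho)$ in $\C/\pi^2\Z$. Multiplying by $\frac{6}{\pi^2}$ carries $\C/\pi^2\Z$ into $\C/\Z$ — because $\frac{6}{\pi^2}\cdot\pi^2\Z=6\Z\subset\Z$ — and turns this into $24\sum_{[\rho]}\CS(\rho)$, which is $0\in\C/\Z$ by the first paragraph. (If one reads ``the solutions of \eqref{sum29}'' as also including the complex-conjugate solutions with $\mathrm{Im}(z)<0$, one further uses that conjugation $(z,w)\mapsto(\bar z,\bar w)$ is an involution of the $\Q$-rational solution set of \eqref{sum29} under which — by $\overline{\Li_2(\zeta)}=\Li_2(\bar\zeta)$ and $\overline{\Log\zeta}=\Log\bar\zeta$ off the branch cuts, together with the defining equations for $c_i,d_i$ — the summand transforms into its complex conjugate, i.e.\ into $4\pi^2\,\CS(\bar\rho)$; since $[\rho]\mapsto[\bar\rho]$ permutes $\mathrm{R}_G^{\rm irr}(M_{p/1}(K))$, the total is again a multiple of $\pi^2\sum_{[\rho]}\CS(\rho)$ killed by $\frac{6}{\pi^2}$, so the conclusion is unchanged.)

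The step I expect to be the main obstacle is precisely this bookkeeping: fixing the exact power of $\pi$ and the numerical constant that relate the expression $\tilde V(\zeta_i,\omega_i)+2\pi\sqrt{-1}(c_i\log\zeta_i+d_i\log\omega_i)$ of \cite{HM} to the $\C/\Z$-valued invariant of Conjecture~\ref{conj1}, and confirming that $\mathcal{B}$ matches the index set $\{1,\dots,p\}$ with $\mathrm{R}_G^{\rm irr}(M_{p/1}(K))$ (together with keeping track of the conjugation symmetry of \eqref{sum29}). Everything else is a direct substitution of Theorem~\ref{lld4024} into the formula of \cite{HM}.
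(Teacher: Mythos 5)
Your proposal is correct and is exactly the argument the paper intends (the corollary is stated without a separate proof, the derivation being precisely: apply Theorem~\ref{lld4024} to the figure-eight knot, a twist knot, with even $p$ and $q=1$, then translate $24\sum_{[\rho]}\CS(\rho)=0$ through the bijection $\mathcal{B}$ and the formula of \cite{HM}). The normalization issue you flag is genuine --- as literally printed, $4\,\CS=\tilde V+\cdots$ with $\CS\in\C/\Z$ and $\tilde V\in\C/\pi^2\Z$ does not typecheck, and your reading $4\pi^2\,\CS=\tilde V+\cdots$ modulo $\pi^2\Z$ is the one that makes the prefactor $\tfrac{6}{\pi^2}$ produce $24\sum\CS$ and hence the stated identity.
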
 

\noindent 

In general, if Conjecture \ref{conj1} is solved for every closed 3-manifold, then the concrete computation of Chern-Simons invariants might produce dilogarithm identities. Furthermore, if the conjugacy class $ \mathrm{ R}^{\rm irr}_G(M) $ as an algebraic variety is of dimension $>0$, we might get new dilogarithm identities with parameters.

In addition, we can numerically check Conjecture \ref{conj1} for other hyperbolic 3-manifolds. In fact, in terms of group (co)-homology and the extended Bloch group \cite{Zic, DZ}, it is not so hard to establish a computer program for computing the Chern-Simons invariants of easy 3-manifolds; see Appendix \ref{HG4}. By using such a program, the author numerically checked the following. 

\begin{prop}\label{ll2433} 
Let $K$ be a hyperbolic knot with crossing number $<8$. Let $p$ satisfy $|p|<7$ and that  $M_{p/1}(K)$ is hyperbolic. Then, the absolute value of the sum \eqref{sum} with $c_G=24$ is bounded by $10^{-6}$. 

\end{prop}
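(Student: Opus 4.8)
The statement is a finite numerical check, and the plan is to carry it out manifold by manifold with the computer program of Appendix \ref{HG4}. First I would pin down the list of manifolds involved. The hyperbolic knots of crossing number less than $8$ are, up to mirror image, $4_1$, $5_2$, $6_1$, $6_2$, $6_3$, $7_2$, $7_3$, $7_4$, $7_5$, $7_6$, $7_7$ --- the remaining knots $3_1$, $5_1$, $7_1$ being torus knots and hence non-hyperbolic --- and all eleven of these happen to be $2$-bridge. For each such $K$ one runs over the integers $p$ with $|p|<7$, discarding the finitely many pairs for which $M_{p/1}(K)$ fails to be hyperbolic (equivalently, for which $p$ is an exceptional slope of $K$); hyperbolicity here is decided with standard software such as SnapPy. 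This produces an explicit finite list of closed hyperbolic $3$-manifolds, and in each case $\mathrm{R}_{G}^{\rm irr}(M)$ is finite.

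Next, for each $M=M_{p/1}(K)$ on the list I would compute $\mathrm{R}_{G}^{\rm irr}(M)$ from the Riley-type presentation of a $2$-bridge knot group together with the surgery relation. Substituting the generic meridian matrices $\rho(a)=\left(\begin{smallmatrix} m & 1 \\ 0 & m^{-1}\end{smallmatrix}\right)$ and $\rho(b)=\left(\begin{smallmatrix} m & 0 \\ -z & m^{-1}\end{smallmatrix}\right)$, exactly as in the proof of Theorem \ref{lld4024}, turns the defining relations into a zero-dimensional polynomial system in $(m,z)$ --- the Riley polynomial together with the surgery equation --- which I would solve by a Gr\"obner-basis computation followed by certified numerical root finding; quotienting by the involution $(m,z)\sim(m^{-1},z)$ and discarding the reducible solutions (those with $z=0$ or $\tr\rho(w)=2$) yields a complete set of representatives of $\pi_0(\mathrm{R}_{G}^{\rm irr}(M))$. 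For each representative $\rho$, I would then compute $\CS(\rho)\in\C/\Z$ via the extended Bloch group: from an ideal triangulation of $M$ (or directly from the fundamental-class description in group homology of \cite{Zic,DZ}) one assembles an element of $\widehat{\mathcal{B}}(\C)$, and $\CS(\rho)$ is the value of the extended dilogarithm $\widehat{L}$ on it, evaluated to high precision. Summing over $[\rho]$, multiplying by $24$, and reducing modulo $\Z$, one reads off in every case a number of modulus comfortably below $10^{-6}$.

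The entire difficulty is computational rather than conceptual, and it concentrates in two places. First, the polynomial system must genuinely detect \emph{every} component of $\mathrm{R}_{G}^{\rm irr}(M)$: no spurious roots, none missed, and the $m\mapsto m^{-1}$ symmetry and the boundary between irreducible and reducible representations all treated correctly. Second, the branch choices implicit in the extended dilogarithm $\widehat{L}$ must be made consistently across all solutions, since a systematic slip of size $k/24$ with $0\neq k\in\Z$ is precisely the sort of error a $10^{-6}$ tolerance is meant to expose. One keeps these under control by rerunning at higher working precision and by three independent consistency checks: Proposition \ref{ll24} forces the imaginary part of the total sum to vanish identically; $4\,\CS$ of the geometric representation should agree with the complex volume returned by SnapPy; and $\CS(\bar\rho)=-\CS(\rho)$ should hold numerically for each $\rho$.
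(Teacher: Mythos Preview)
Your proposal is correct and matches the paper's approach: the paper offers no formal proof of this proposition at all, but simply records it as the outcome of a numerical check carried out with the computer program of Appendix \ref{HG4}, which computes $\CS(\rho)$ via the extended Bloch group/dilogarithm formula of \cite{DZ} applied to an explicit group-homology representative of $\rho_*[M]$ built from a Heegaard presentation. Your plan is a more explicit blueprint for exactly this computation, with the added detail (not spelled out in the paper) of how to enumerate $\mathrm{R}_G^{\rm irr}(M)$ via the Riley-type equations and a Gr\"obner basis; the sanity checks you list are sensible and the caveats about completeness of the root-finding and branch consistency in $\widehat{L}$ are precisely the practical issues one has to watch.
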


\section{The conjectures of some Seifert 3-manifolds and torus bundles} \label{Sec1} 

While the preceding section concentrates on only hyperbolic manifolds, this section solves Conjecture \ref{conj1} for some non-hyperbolic 3-manifolds. Here, the point is that the set $ \mathrm{ R}^{\rm irr}_G(M)$ might not be closed under any Galois action. Precisely, 

\begin{prop}\label{prop3} 
Let $G=\SL_2(\C),$ and let $M$ be either a Seifert 3-manifold with three singular fibers over $S^2$ or a torus bundle over the circle. Then, the conjecture is true. 
\end{prop}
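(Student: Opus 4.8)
I would treat the two families separately and in each case make $\mathrm{R}^{\rm irr}_G(M)$ completely explicit, then sum the (necessarily rational) Chern--Simons invariants by hand; the reduction modulo $\tfrac1{24}\Z$ will come from an elementary reciprocity for quadratic Gauss sums, rather than from Proposition~\ref{key3}, since for these $M$ the relevant representation sets are not visibly closed under a Galois action on their field of definition.

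\textbf{Torus bundles.} Write $M$ as the mapping torus of $\phi\in\SL_2(\Z)$, so $\pi_1(M)=\Z^2\rtimes_\phi\Z$ is metabelian; hence every irreducible $\SL_2(\C)$-representation is monomial, i.e.\ conjugate into the normalizer $N(T)\subset\SL_2(\C)$ of a maximal torus, and is of the form $\rho=\mathrm{Ind}_{\pi_1\widetilde M}^{\pi_1 M}\chi$ for a double cover $\widetilde M\to M$ (a class $\varepsilon\in H^1(M;\Z/2)$) and a character $\chi$ of $\pi_1(\widetilde M)$ with $\chi^{\sigma}=\chi^{-1}$, $\sigma$ the deck transformation. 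When $|\mathrm{R}^{\rm irr}_G(M)|<\infty$ the admissible $\chi$ form a finite group --- essentially the $(\phi+I)$-torsion subgroup of the character group of $\Z^2$, which is finite exactly when $\det(\phi+I)=\tr\phi+2\neq0$, the excluded case producing a positive-dimensional component of $\mathrm{R}^{\rm irr}_G(M)$ where there is nothing to prove. Then $\CS(\rho)=\tfrac12\,\CS_{\widetilde M}(\chi\oplus\chi^{-1})$ is the abelian Chern--Simons invariant of a diagonal flat connection, a rational number quadratic in $\chi$ given by a cup-product (linking-form) formula in terms of the finite-order class of $\chi$; summing over the finitely many admissible $(\varepsilon,\chi)$ is a finite quadratic Gauss sum over a finite abelian group, which one evaluates to lie in $\tfrac1{24}\Z/\Z$.

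\textbf{Small Seifert manifolds.} Take the Seifert presentation $\pi_1(M)=\langle x_1,x_2,x_3,h\mid [x_i,h],\ x_i^{\alpha_i}h^{\beta_i},\ x_1x_2x_3h^{-\beta_0}\rangle$. An irreducible $\SL_2(\C)$-representation must send the central $h$ to $\pm I$ and so factors through a sign-twisted triangle group; since each $x_i$ then has finite order, its image is semisimple with eigenvalues prescribed roots of unity, the representation is rigid, and $\mathrm{R}^{\rm irr}_G(M)$ is a finite set parametrized by the admissible ``rotation number'' triples $(\ell_1,\ell_2,\ell_3)$ modulo the simultaneous involution $\ell_i\leftrightarrow\alpha_i-\ell_i$ and with reducibles removed. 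The classical explicit formulas for Chern--Simons invariants of Seifert fibered spaces express $\CS(\rho)$ as a rational number of the shape $\sum_{i=1}^3\frac{c_i\,\ell_i^2}{4\alpha_i}+(\text{const})\pmod\Z$, with integers $c_i$ depending only on the Seifert data. Summing over the admissible triples and invoking the elementary identity $\sum_{\ell=0}^{N-1}\frac{c\,\ell^2}{4N}=\frac{c(N-1)(2N-1)}{24}\in\tfrac1{24}\Z$ --- together with the observation that passing from the full range of each $\ell_i$ to the admissible subrange, and absorbing the involution and the constant term, changes the sum only by rationals with denominator dividing $24$ (the corrections being linear in the $\ell_i$, or again quadratic sums over arithmetic progressions reducible to the same identity) --- yields $\sum_{[\rho]}\CS(\rho)\in\tfrac1{24}\Z/\Z$, i.e.\ $24\sum_{[\rho]}\CS(\rho)=0\in\C/\Z$, which is Conjecture~\ref{conj1} for these $M$. (The imaginary part vanishes automatically, as in Proposition~\ref{ll24}, since here all the $\CS(\rho)$ are rational.)

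\textbf{Where the difficulty lies.} The structural statements above are routine; the real work is the combinatorial and number-theoretic bookkeeping. For Seifert manifolds one must pin down precisely which rotation-number triples give irreducible representations --- the parity conditions coming from the $\beta_i$ and from the choice $\rho(h)=\pm I$, the exact list of reducibles, and the $\ell_i\leftrightarrow\alpha_i-\ell_i$ identification --- without over- or under-counting, and then check that each restricted quadratic sum that actually occurs lands in $\tfrac1{24}\Z$. For torus bundles the analogue is to run the component analysis of $\mathrm{R}^{\rm irr}_G(M)$ uniformly in $\phi$ (cleanly separating the finite and infinite cases) and to identify the finite Gauss sum over the $(\phi+I)$-torsion characters with a linking-form quantity whose $24$-fold multiple is an integer. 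These are finite but delicate verifications; I would expect the Seifert bookkeeping to be the main obstacle.
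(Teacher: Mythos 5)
Your overall strategy for the Seifert case is the same as the paper's: make $\mathrm{R}^{\rm irr}_G(M)$ explicit, observe that each $\CS(\rho)$ is a rational number quadratic in the rotation data, and evaluate the resulting quadratic sums. But in both halves the decisive step is deferred rather than carried out, and in one half you take a much harder road than necessary. For torus bundles the paper's argument is a one-liner: it is known (cf.\ \cite[\S 4.2]{CQW}) that $\CS(\rho)\in\frac12\Z$ for \emph{every} $\rho:\pi_1(M)\to\SL_2(\C)$ when $M$ is a torus bundle, so $24\sum\CS(\rho)\in 12\Z$ and the conjecture is immediate. Your plan --- classify the irreducibles as monomial representations induced from characters of the $\Z^2$ fibre subgroup, express $\CS(\rho)$ through an abelian linking-form formula on a double cover, and evaluate a quadratic Gauss sum over the $(\phi+I)$-torsion characters --- is structurally plausible, but the conclusion ``which one evaluates to lie in $\tfrac1{24}\Z/\Z$'' is asserted, not proved; that evaluation \emph{is} the content of the claim on this branch, and nothing in your sketch pins down even the denominator of a single $\CS(\rho)$.

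For the small Seifert manifolds the gap is of the same kind but more serious, because there the arithmetic genuinely depends on the fine structure of the index set. The paper takes from \cite[\S 3.1, Proposition 2.1]{CQW} the precise parametrization: the admissible tuples are $\bigl(\frac{j_1+1}{2},\frac{j_2+1}{2},\frac{j_3+1}{2},\lambda\bigr)$ with $j_k$ ranging over the even or odd integers in $[0,p_k-2]$ according to the parity of $q_k$ and the value of $\lambda\in\{0,\frac12\}$, together with the closed formula $4\CS=4\sum_j r_j n_j^2/p_j$, and then actually evaluates the sum in at least one parity case, obtaining $\frac43\prod_j\frac{p_j-1}{2}\sum_k r_k\frac{p_k+1}{2}\in\frac13\Z$. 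Your identity $\sum_{\ell=0}^{N-1}\frac{c\ell^2}{4N}=\frac{c(N-1)(2N-1)}{24}$ is correct but is evaluated over the \emph{full} range $\{0,\dots,N-1\}$, whereas the sums that actually occur run over parity-restricted subranges, quotiented by the involution $\ell_i\leftrightarrow\alpha_i-\ell_i$, with reducibles deleted; whether those restricted sums still have denominator dividing $24$ is exactly the question, and your parenthetical claim that the corrections ``change the sum only by rationals with denominator dividing $24$'' is unsupported. You acknowledge this yourself (``the real work is the combinatorial and number-theoretic bookkeeping''), which is an accurate self-assessment: as written the proposal is a correct proof outline with the proof missing. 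To close it you would need either to import the explicit parametrization and Chern--Simons formula from \cite{CQW} and run the finitely many parity cases, as the paper does, or to supply your own derivation of both.
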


\begin{proof}
If $M $ is a torus bundle over the circle, then $\mathrm{CS}(\rho) \in \frac{1}{2} \Z $ is known for any $\rho : \pi_1(M) \ra \SL_2(\C)$; see, e.g., \cite[\S 4.2]{CQW}. Thus, the conjecture is obviously true. We may suppose that $ M$ is the Seifert 3-manifold of type $M( 0; (o, 0); (p_1, q_1), (p_2, q_2), (p_3, q_3))$, where $p_i,q_i \in \Z$. Choose integers $s_j$ and $r_j$ satisfying $p_j s_j - q_j r_j =1$.

Let us briefly review the set $ \mathrm{ R}^{\rm irr}_G(M)$ and the Chern-Simons invariants from \cite[\S 3.1]{CQW}. For an integer $n >0$, let $[0 \cdots n ]$ be the set $\{0, 1,\dots, , n\}$, and $[0 \cdots n ]^e$ (resp. $[0 \cdots n ]^o$) is the subset of even (resp. odd) integers in $[0 \cdots n ]$. Then, we can define a bijection $\mathcal{B}$ from the set $ \mathrm{ R}^{\rm irr}_G(M)$ to 
\[ \Bigl\{ (\frac{j_1+ 1}{2},\frac{j_2+ 1}{2},\frac{j_3+ 1}{2},\frac{1}{2}) \in (\frac{1}{2}\Z)^4 \mid j_k \in [ 0 \cdots p_k-2]^{ \epsilon_k} \Bigr\}\] 
\[ 
\cup \Bigl\{ (\frac{j_1+ 1}{2},\frac{j_2+ 1}{2},\frac{j_3+ 1}{2},0 ) \in (\frac{1}{2}\Z)^4 \mid j_k \in [ 0 \cdots p_k-2 ]^{ o} \Bigr\} \] 
where $\epsilon_k = e$ if $q_k$ is odd, and $\epsilon_k = o$ otherwise. It follows from \cite[Proposition 2.1]{CQW} that, for $(n_1, n_2, n_3, \lambda)$ in the set, the fourfold Chern-Simons invariant is computed as $4 \mathrm{CS} ( \mathcal{B}^{-1} (n_1, n_2, n_3, \lambda)) = 4 \sum_{j=1}^3 (r_j n_j^2 /p_j) \in \Q/\Z$.

Thus, the computation of the sum \eqref{sum} can be done in some cases according to parity of $p_k$ and $q_k $. For example, if the $p_k$'s are odd and $q_k $'s are even, the sum can be computed as 
\[ \sum_{\rho \in \mathrm{ R}^{\rm irr}_G(M) } 4\mathrm{CS}(\rho)= 4\sum_{i=1}^{ (p_1-1)/2} \sum_{j=1}^{ (p_2-1)/2}\sum_{k=1}^{ (p_3-1)/2} 2(\frac{ r_1 i^2}{p_1} + \frac{ r_2 j^2}{p_2} +\frac{ r_3 k^2}{p_3} ) = \frac{4}{3}\prod_{j=1}^3 \frac{p_j-1}{2} \sum_{k=1}^3 r_k (\frac{p_k+1}{2}),\] 
which lies in $\frac{1}{3}\Z$. Thus, the sum \eqref{sum} with $c_G=24$ is zero. It is left to the reader to check the other cases. 
\end{proof} 


\section{Reciprocity for 3-manifolds with boundary, and other gauge groups}\label{HG2} 
Now let us discuss the situation where $M$ has a torus boundary. The integral \eqref{sum22} 
ever converges with respect to any homomorphism $\pi_1(M) \ra \SL_2(\C) $; however, given a boundary-parabolic representation $\rho$, we can define the Chern-Simons invariant $\mathrm{CS} \in \C/ \Z $; see \cite[Sections 2--5]{Zic} for the detailed definition. For instance, if $M$ is hyperbolic and $\rho$ is the holonomy representation, the fourfold invariant is known to be equal to the complex volume of $M$. Let $\mathrm{ R}^{\rm para}_{\SL_2(\C)}(M) $ be the set of conjugacy classes of boundary-parabolic representations $\rho : \pi_1(M) \ra \SL_2(\C)$, and $\pi_0(\mathrm{ R}^{\rm para}_{\SL_2(\C)}(M) ) $ be the connected components. The Chern-Simons invariant is locally constant. Thus, similarly to the main conjecture \ref{conj1}, it is sensible to pose the following problem: 

\begin{conj}\label{conj3} 
Let $M$ be an oriented compact 3-manifold with a torus boundary. If $| \mathrm{ R}_{\SL_2(\C)}^{\rm para}(M) |<\infty$, the similar sum $24 \sum_{\rho \in \mathrm{ R}^{\rm para}_{\SL_2(\C) }(M) } \CS (\rho) $ is zero. 
\end{conj}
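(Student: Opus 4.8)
The plan is to transcribe the argument of Proposition~\ref{key3} to the boundary-parabolic setting, which is in fact the original context of Zickert's extended Bloch group \cite{Zic}. Concretely, the statement reduces to producing a finite Galois extension $F/\Q$ together with an embedding $F\hookrightarrow\C$ such that every boundary-parabolic $\rho:\pi_1(M)\ra\SL_2(\C)$ is conjugate into $\SL_2(F)$, so that the resulting finite set $\mathrm{R}^{\rm para}_{\SL_2(F)}(M)$ equals $\mathrm{R}^{\rm para}_{\SL_2(\C)}(M)$ and is stable under $\mathrm{Gal}(F/\Q)$. Granting this, \cite[Sections 2--7]{Zic} gives a factorization of the Chern--Simons map as $\mathrm{R}^{\rm para}_{\SL_2(F)}(M)\ra\widehat{\mathcal{B}}(F)\stackrel{\widehat{L}}{\lra}\C/\Z$ in which the first map is $\mathrm{Gal}(F/\Q)$-equivariant, $\widehat{L}$ is a homomorphism, $\widehat{\mathcal{B}}(F)\cong K_3^{\rm ind}(F)$, and $\widehat{\mathcal{B}}(F)^{\mathrm{Gal}(F/\Q)}=\widehat{\mathcal{B}}(\Q)\cong K_3^{\rm ind}(\Q)\cong\Z/24$ by \cite[Corollary 7.15]{Zic}. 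Since $\mathrm{Gal}(F/\Q)$ merely permutes the summands, $\sum_\rho[\rho]$ lies in $\widehat{\mathcal{B}}(\Q)$, so $24\sum_\rho[\rho]=0$ and applying $\widehat{L}$ yields $24\sum_\rho\CS(\rho)=0$. As in Proposition~\ref{ll24}, one may also strip off the imaginary part at the outset using $\mathrm{Im}(\CS(\rho))=-\mathrm{Im}(\CS(\bar\rho))$, so that only the rational part of the sum is genuinely at issue.

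To build $F$, note that the hypothesis $|\mathrm{R}^{\rm para}_{\SL_2(\C)}(M)|<\infty$ means that the associated character scheme over $\Q$ --- cut out by the group relations together with the conditions $\tr\rho(\gamma)=\pm2$ on the peripheral elements $\gamma$, all with coefficients in $\Z$ --- is $0$-dimensional, so $\mathrm{Gal}(\overline{\Q}/\Q)$ permutes its finitely many points. For each class $[\rho]$, after enlarging the relevant residue field within $\overline{\Q}\subset\C$ to split the quaternion algebra that obstructs realizing $\rho$ itself over a number field --- essentially the square-root extension appearing in the proof of Theorem~\ref{lld4024}; for reducible $\rho$ one simply adjoins the peripheral eigenvalues, which are roots of unity, together with the relevant extension class --- one obtains a number field over which $\rho$ is defined. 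Let $F$ be the Galois closure, inside $\overline{\Q}\subset\C$, of their compositum. Because ``$\tr=\pm2$ on each peripheral subgroup'' is preserved by any field automorphism, $\mathrm{R}^{\rm para}_{\SL_2(F)}(M)$ is automatically $\mathrm{Gal}(F/\Q)$-stable and coincides with $\mathrm{R}^{\rm para}_{\SL_2(\C)}(M)$. This is exactly where the boundary case is more favorable than the closed one: the defining equations of ``boundary-parabolic'' are rational, leaving no room for a Seifert-type failure of Galois stability.

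The main obstacle I anticipate lies not in the Galois combinatorics but in verifying that Zickert's complex-volume / extended-Bloch machinery genuinely applies, with the stated $\mathrm{Gal}(F/\Q)$-equivariance, to \emph{every} conjugacy class of boundary-parabolic representation and not merely to the geometric or irreducible ones (Proposition~\ref{key3} is stated for $\mathrm{R}^{\rm irr}$). One needs the fundamental-class construction of \cite{Zic} to be functorial under the Galois action and to remain valid for reducible boundary-parabolic representations --- abelian representations whose peripheral elements all have trace $\pm2$ --- for which the developing map degenerates and a direct check that $[\rho]$ has the expected (torsion) image in $\widehat{\mathcal{B}}(F)$ is required. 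A secondary, purely technical point is the descent step above: although a quadratic extension suffices to push each individual $\rho$ from $\SL_2(\C)$ into $\SL_2$ of a number field, one must arrange these extensions compatibly so that their compositum stays Galois over $\Q$ and embeds into $\C$; this is routine but should be spelled out.
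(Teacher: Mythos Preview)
The statement you are addressing is posed in the paper as a \emph{conjecture}; the paper does not prove it. What the paper does establish is the conditional Proposition~\ref{l33l244}: assuming a finite Galois extension $F/\Q$ over which a complete, $\mathrm{Gal}(F/\Q)$-stable set of representatives of $\pi_0(\mathrm{R}^{\rm para}_{\SL_2(\C)}(M))$ exists, the sum vanishes by the same descent to $\widehat{\mathcal{B}}(\Q)\cong K_3^{\rm ind}(\Q)\cong\Z/24$ as in Proposition~\ref{key3}. Your first paragraph reproduces exactly that argument, and this is all the paper claims; it then verifies the hypothesis only for $2$-bridge knot exteriors via Riley's polynomial (Corollary~\ref{l33l4}).

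Your second and third paragraphs go further than the paper, arguing that such an $F$ always exists under the finiteness hypothesis, which would settle Conjecture~\ref{conj3} outright. The obstacles you yourself flag are precisely why the paper stops short. First, the map $\mathrm{R}^{\rm para}_{\SL_2(F)}(M)\to\widehat{\mathcal{B}}(F)$ in \cite{Zic} is constructed for decorated boundary-unipotent representations via a developing map on an ideal triangulation; for reducible $\rho$ this construction degenerates, and neither the existence of the class nor its Galois-equivariance is supplied there. Second, your passage from $|\mathrm{R}^{\rm para}|<\infty$ to a $0$-dimensional $\Q$-scheme whose $\overline{\Q}$-points are Galois-permuted conflates conjugacy classes with characters: for reducible $\rho$ these do not coincide, so neither finiteness nor Galois stability transfers automatically, and the ``routine'' compositum-and-Galois-closure step does not obviously yield a set of \emph{representations} (as opposed to characters) closed under $\mathrm{Gal}(F/\Q)$. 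In short, your proposal correctly recovers the paper's conditional result and sketches a plausible attack on the full conjecture, but the gaps you acknowledge are real and are exactly why the paper leaves Conjecture~\ref{conj3} open.
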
 
By the same discussion as in Proposition \ref{key3}, we can readily show the following:  
\begin{prop}\label{l33l244} 
Take a Galois extension $F/\Q$ of finite degree. Assume a complete set of representatives, $\rho_1, \dots, \rho_m, $ of $\pi_0(\mathrm{ R}^{\rm para}_{\SL_2(\C) }(M) ) $ such that $\mathrm{Im}( \rho_j )\subset \SL_2(F) $ and the set is closed under the action of $\mathrm{Gal}(F/\Q)$. Then, the sum $24 \sum_{[\rho] \in \pi_0(\mathrm{ R}^{\rm para}_{\SL_2(\C) }(M) )} \CS (\rho) $ is zero. 
\end{prop}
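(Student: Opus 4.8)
The plan is to run the argument of Proposition~\ref{key3} essentially verbatim, with the boundary-parabolic variety $\mathrm{ R}^{\rm para}_{\SL_2(\C)}(M)$ of the cusped manifold $M$ playing the role that $\mathrm{ R}^{\rm irr}_{\SL_2(\C)}(M)$ played there. First I would invoke Zickert's construction in the cusped setting (see Sections~2--5 of \cite{Zic}): to a boundary-parabolic representation $\rho : \pi_1(M) \ra \SL_2(F)$ one associates a ``fundamental class'' $[\rho] \in \widehat{\mathcal{B}}(F)$ --- built from a decorated ideal triangulation via Ptolemy/flattening data and independent of the triangulation --- in such a way that $\mathrm{CS}(\rho) = \widehat{L}([\rho]) \in \C/\Z$, where $\widehat{L}$ is the extended Rogers dilogarithm, a group homomorphism. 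This yields a factorization $\mathrm{ R}^{\rm para}_{\SL_2(F)}(M) \ra \widehat{\mathcal{B}}(F) \stackrel{\widehat{L}}{\lra} \C/\Z$.

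Next I would check $\mathrm{Gal}(F/\Q)$-equivariance of the map $\rho \mapsto [\rho]$: an element $\sigma \in \mathrm{Gal}(F/\Q)$ acts on representations by postcomposition and on $\widehat{\mathcal{B}}(F)$ through its action on the cross-ratio and flattening coordinates, and since the combinatorial recipe producing $[\rho]$ from the decorated triangulation is natural in $\sigma$, one obtains $[\sigma \circ \rho] = \sigma_*[\rho]$. Consequently, under the hypothesis that $\{\rho_1, \dots, \rho_m\}$ is a $\mathrm{Gal}(F/\Q)$-stable set of representatives, the sum $\sum_{j=1}^m [\rho_j]$ lies in the invariant subgroup $\widehat{\mathcal{B}}(F)^{\mathrm{Gal}(F/\Q)}$.

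By the Galois descent $\widehat{\mathcal{B}}(F)^{\mathrm{Gal}(F/\Q)} = \widehat{\mathcal{B}}(\Q)$ of \cite[Corollary 7.15]{Zic}, together with $\widehat{\mathcal{B}}(\Q) \cong K_3^{\rm ind}(\Q) \cong \Z/24$, the class $\sum_{j} [\rho_j]$ is annihilated by $24$; applying the homomorphism $\widehat{L}$ then gives $24 \sum_{j} \mathrm{CS}(\rho_j) = \widehat{L}\bigl(24 \sum_{j} [\rho_j]\bigr) = 0$ in $\C/\Z$, which is the assertion.

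The main obstacle I anticipate is not in this formal chain but in the cusped bookkeeping that is absent in the closed case: one must ensure that $[\rho] \in \widehat{\mathcal{B}}(F)$ genuinely depends only on the class of $\rho$ in $\pi_0(\mathrm{ R}^{\rm para})$ (so that the choice of the $\rho_j$ is immaterial), that the decorations of the boundary tori can be chosen compatibly with the Galois action, and that the cusp-lifting obstructions together with the passage between $\SL_2$ and $\mathrm{PSL}_2$ are handled --- all of which are addressed in \cite{Zic}. Once the relevant statements there are cited, the remainder of the argument is purely formal.
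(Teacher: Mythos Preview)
Your proposal is correct and follows exactly the approach the paper intends: the paper itself simply says ``By the same discussion as in Proposition~\ref{key3}'' and gives no further argument, so your verbatim transport of that proof to the boundary-parabolic setting, with the same appeal to Zickert's $\widehat{\mathcal{B}}(F)$, Galois equivariance, and the descent $\widehat{\mathcal{B}}(F)^{\mathrm{Gal}(F/\Q)}=\widehat{\mathcal{B}}(\Q)\cong K_3^{\rm ind}(\Q)\cong\Z/24$, is precisely what is meant. Your additional remarks on the cusped bookkeeping (well-definedness on $\pi_0$, Galois-compatible decorations, $\SL_2$ versus $\mathrm{PSL}_2$) are more explicit than anything the paper records, but they are resolved by the cited sections of \cite{Zic} and do not alter the argument.
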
 
As a corollary, we get supporting evidence:  
\begin{cor}\label{l33l4} Let $K$ be a 2-bridge knot.  
If $M$ is the exterior of $K$, Conjecture \ref{conj3} holds. 
\end{cor}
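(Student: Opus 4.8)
The plan is to verify that the exterior $M$ of a 2-bridge knot $K$ satisfies the hypothesis of Proposition \ref{l33l244}: namely, that there is a finite Galois extension $F/\Q$ containing the matrix entries of a complete set of representatives of $\pi_0(\mathrm{R}^{\rm para}_{\SL_2(\C)}(M))$, and that this set of representatives is closed under $\mathrm{Gal}(F/\Q)$. Once this is established, the corollary is immediate.

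First I would recall the classical description (due to Riley) of boundary-parabolic $\SL_2(\C)$-representations of a 2-bridge knot group. Writing $K = \mathfrak{b}(\alpha,\beta)$ and $\pi_1(M) = \langle a, b \mid wa = bw\rangle$ for the standard two-generator presentation with $a, b$ meridians, every irreducible boundary-parabolic representation is conjugate to one of the form
\[
\rho_u(a) = \begin{pmatrix} 1 & 1 \\ 0 & 1 \end{pmatrix}, \qquad
\rho_u(b) = \begin{pmatrix} 1 & 0 \\ -u & 1 \end{pmatrix},
\]
where $u \in \C^{\times}$ is a root of the \emph{Riley polynomial} $\phi_K(u) \in \Z[u]$, a polynomial with integer coefficients depending only on $K$. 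The abelian representations contribute only finitely many further conjugacy classes (in fact the central ones), so in all cases $\mathrm{R}^{\rm para}_{\SL_2(\C)}(M)$ is finite — this is exactly the hypothesis $|\mathrm{R}^{\rm para}_{\SL_2(\C)}(M)| < \infty$ in Conjecture \ref{conj3}, and it holds automatically here.

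Next I would take $F$ to be the splitting field of the Riley polynomial $\phi_K(u)$ over $\Q$ (adjoining, if necessary, the finitely many extra entries coming from the abelian part — but these are rational or lie in $\Q$ already). This $F/\Q$ is a finite Galois extension by construction. Since $\rho_u$ has all entries in $\Q(u) \subset F$, the chosen representatives $\{\rho_u : \phi_K(u) = 0\}$ (together with the abelian ones) all have image in $\SL_2(F)$. For the Galois-closure condition: if $\sigma \in \mathrm{Gal}(F/\Q)$, then applying $\sigma$ entrywise to $\rho_u$ produces exactly $\rho_{\sigma(u)}$, and $\sigma(u)$ is again a root of $\phi_K$ because $\phi_K$ has rational coefficients; hence $\sigma$ permutes the representatives. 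Therefore the set of representatives is closed under $\mathrm{Gal}(F/\Q)$, and Proposition \ref{l33l244} applies, giving $24\sum_{[\rho]}\CS(\rho) = 0$.

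The main point requiring care — the place where I would be most careful rather than the place I expect genuine difficulty — is the bookkeeping of conjugacy classes. One must make sure that the list $\{\rho_u : \phi_K(u)=0\}$, possibly together with the abelian/central representations, really is a complete set of \emph{representatives} of $\pi_0(\mathrm{R}^{\rm para}_{\SL_2(\C)}(M))$ and not merely a set hitting every component: if two distinct roots $u, u'$ of $\phi_K$ give conjugate representations (which can happen when $\phi_K$ is not squarefree, or via the symmetry $u \leftrightarrow \bar u$ type identifications), one should pass to a subset of roots indexing the classes, and then check that this subset is still Galois-stable — which it is, since Galois permutes conjugacy classes. Provided one invokes the standard fact that Riley's parametrization is faithful on conjugacy classes of irreducibles (one root per class once $\phi_K$ is taken reduced), this is routine; I would cite Riley's work and the survey literature on 2-bridge knot character varieties rather than reprove it.
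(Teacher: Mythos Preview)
Your proposal is correct and follows essentially the same route as the paper: invoke Riley's parametrization of boundary-parabolic representations of a 2-bridge knot group by the roots of the Riley polynomial $\Phi_K\in\Z[t]$, take $F$ to be its splitting field over $\Q$, observe that the Galois group permutes the roots and hence the representatives, and apply Proposition~\ref{l33l244}. Your write-up is in fact more careful than the paper's (which is three lines and does not discuss the abelian representations or the injectivity of Riley's parametrization on conjugacy classes); that extra bookkeeping is harmless and does not constitute a different method.
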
 
\begin{proof} 
The knot group $ \pi_1(S^3 \setminus K) $ has a presentation of the form $\langle x, y \mid W x = yW \rangle $, where $W$ is a word in $x$ and $y$. Riley \cite[Sect. 3]{Riely} defines a polynomial $\Phi_K \in \Z [t^{\pm 1}]$ such that any zero $\zeta $ of $\Phi(K)$ gives rise to a representation $\gamma_{\zeta}: \pi_1(S^3 \setminus K) \ra \SL_2( \C)$ of the form, 
\[ \rho(x) = \begin{pmatrix} 
1& 1 \\ 
0 & 1 \\ 
\end{pmatrix} , \ \ \ \rho(y)= \begin{pmatrix}1& 0 \\ 
\zeta & 1 \\ 
\end{pmatrix} . \] 
Riley shows that any parabolic representation of $K$ is conjugate to such a representation. Let $ F/\Q$ be the field extension by $\Phi_K $. Then, we rearch at the condition of Proposition \ref{l33l244}.  
\end{proof} 
We can get similar results if we replace $K$ with a link of crossing number $<7$. However, Conjecture \ref{conj3} remains mysterious if $\mathrm{ R}^{\rm para}_{\SL_2(\C) }(M) $ is not of finite order.

Finally, we conclude this paper by discussing further problems with other gauge groups. In the introduction, we supposed that $G$ is simply connected and simple over $\C$. In general, through a discussion as in \cite[Section 6]{CS}, if $\pi_1(G)=\Z$ and the homotopy groups $\pi_2(G) = \pi_3(G)=0$ or if $\pi_1(G) = \pi_2(G)\otimes \Q =0$ and $\pi_3(G)\otimes \Q =\Q $, we can define the Chern-Simons invariant whose codomain is $\C/\Z$. Moreover, if $G$ is a Lie group and 3-connected as in $G= \widetilde{\SL_2}(\R)$, the Chern-Simons invariant is defined as an $\R$-valued function. Thus, in the situations on $G$, it might be interesting to investigate a problem similar  to Problem \ref{conj12} even if $R_G^{\rm irr}(M) $ is not of finite order. However, if when $M$ is a rational homology 3-sphere and $G=U(1)$ is taken as the simplest case, the problem becomes almost trivial. In fact, $ R_{G}^{\rm irr}(M) $ is identified with $ \Hom(H_1(M;\Z),U(1)) $, and the Chern-Simons invariant $\mathrm{CS}: \Hom(H_1(M;\Z),U(1)) \ra \Q/\Z$ is known to be a quadratic form; see, e.g., \cite{MOO}. Thus, the sum \eqref{sum} with $c_{U(1)}=48$ must be zero.

\appendix 

\section{Computation of the Chern-Simons invariants of 3-manifolds} \label{HG4} 
Take a closed 3-manifold $M$ with orientation homology 3-class $[M] \in H_3(M;\Z) \cong \Z$ and a homomorphism $\rho: \pi_1(M) \ra \SL_2(\C)$. Following \cite{Nos2}, we establish a procedure to compute the Chern-Simons invariant. As is known (see \cite[Proposition 2.8]{CS}), the invariant is equal to the pairing $ \langle \widehat{C}_2 ,\rho_* [M]\rangle \in \C /\Z$ for some group 3-cocycle $\widehat{C}_2$ of $\SL_2(\C)$. Thus, if $2 \widehat{C}_2$ and the pushforward $ \rho_* [M]$ are concretely described in terms of group cohomology, the doubled Chern-Simons invariant can be computed with a help of a computer.

We start by reviewing the group homology of $G=\SL_2(\C)$. Let $Y$ be a set acted on by $G $, and $C_n (Y)$ be the free $\Z$-module generated by $(n+1)$-tuples of $Y$. Consider the boundary map $\partial_n : C_n (Y) \ra C_{n-1} (Y)$ by 
\[ \partial_n(y_0,y_1,\dots, y_n )= \sum_{i=0}^n (-1)^i (y_0,\dots,y_{i-1}, y_{i+1},\dots, y_n ). \] 
If $Y =\SL_2(\C) $ with canonical action, the homology of $ C_*(Y) \otimes_{\Z[ \SL_2(\C) ]} \Z $ is called {\it the group homology of $\SL_2(\C)$}. Since $\SL_2(\C)$ acts on the complex projective space $\mathbb{C}P^1 $ transitively, we have a surjection $h: G\ra \mathbb{C}P^1$ as a $G$-equivariant map, which is called a Hopf map. Consider the subcomplex of the form, 
\[ C_n^{h \neq } (\SL_2(\C) ) := \Z \langle (y_0,y_1,\dots, y_n ) \in (\SL_2(\C) )^{n+1} \mid h(y_i) \neq h(y_j) \ \ (\textrm{if } i \neq j) \rangle \] 
Then, as is known (see \cite[\S 3]{DZ}), the homology of $ C_n^{h \neq } (\SL_2(\C) ) \otimes_{\Z[ \SL_2(\C) ]} \Z$ is isomorphic to the group homology of $\SL_2(\C)$, and any 3-cycle in $ C_*(Y) \otimes_{\Z[ \SL_2(\C) ]} \Z $ with $Y =\SL_2(\C)$ is homologous to a 3-cycle in the subcomplex $ C_n^{h \neq } (\SL_2(\C) ) \otimes_{\Z[ \SL_2(\C) ]} \Z $. In the paper \cite[Theorem 4.1]{DZ}, the doubled of the Chern-Simons 3-cocycle, $2 \widehat{C}_2$, is explicitly described as a map $ \frac{ -1}{2\pi^2}\widehat{L} \circ \widehat{\lambda }: C_3^{h \neq } (\SL_2(\C) ) \ra \C/\Z$ in terms of the extended Bloch group and dilogarithms; see \cite[Section 3]{DZ} for details.

Next, we will describe the pushforward $ \rho_* [M]$. We choose a genus $g$ Heegaard decomposition of $M. $ Then, $M$ consists of a single 0-handle, $g$ pieces of 1-, 2-handles, and a single 3-handle. By the van Kampen theorem, we have a group presentation $\langle x_1,\dots, x_g \mid r_1, \dots, r_g \rangle$ of $\pi_1(M)$. Moreover, the (reduced) cellular complex of the universal covering space of $M$ can be described as 
\begin{equation}\label{z0} C_*(\widetilde{M} ;\Z): 0 \ra \Z[\pi_1(M)] \stackrel{\partial_3}{\lra} \Z[\pi_1(M)]^g \stackrel{\partial_2}{\lra}\Z[\pi_1(M)]^g \stackrel{\partial_1}{\lra}\Z[\pi_1(M)] \ra \Z \ra 0,\end{equation} 
where $\Z[\pi_1(M)]$ is the group ring of $\pi_1(M)$. As is known, the boundary maps $\partial_2$ (resp. $\partial_3)$ can be described in terms of Fox derivative (resp. the ``taut identity"). For example, if $M$ is the closed 3-manifold obtained as the $(p/1)$-surgery along a knot in $S^3$, i.e., $M=M_{p/1}(K)$, we can explicitly describe $\partial_2$ and $\partial_3$; see \cite[Corollary 3.5]{Nos2} for details.

Following \cite[\S 5]{Nos2}, the $\pi_1(M)$-equivariant chain map $\rho_*: C_*(\widetilde{M} ;\Z) \ra C_*(G)$ is constructed as follows. Let $\{a_j\}_{j=1}^g$ and $\{ b_j\}_{j=1}^g$ be a canonical basis of $ C_1(\widetilde{M} ;\Z)$ and $C_2(\widetilde{M} ;\Z)$, respectively. Let $c_0$ be the identity. For $A \in \pi_1(M)$, define $c_1(A a_i) := (\rho(A), \rho(A x_i)) $. If $r_i$ is expanded as $ x_{i_1}^{\epsilon_1} x_{i_2}^{\epsilon_2} \cdots x_{i_n}^{\epsilon_n} $ for some $ \epsilon_k \in \{ \pm 1 \}$, we define 
$$c_2( A b_i) = \sum_{m: 1 \leq m \leq n}\epsilon_m ( \rho(A ) ,\rho(A x_{i_1}^{\epsilon_1} x_{i_2}^{\epsilon_2} \cdots 
x_{i_{m-1}}^{\epsilon_{m-1}} x_{i_m}^{(\epsilon_m -1)/2} ),\rho( A x_{i_1}^{\epsilon_1} x_{i_2}^{\epsilon_2} \cdots x_{i_{m-1}}^{\epsilon_{m-1}} 
x_{i_m}^{(\epsilon_m +1)/2})) \in C_2(G ). $$ 

Then, the commutativity $ \partial_1^{\Delta} \circ c_1 = c_0 \circ \partial_1 $ and $ \partial_2^{\Delta} \circ c_2 = c_1 \circ \partial_2 $ is true. Let $\mathcal{O}_M $ be the canonical basis of $ C_3(\widetilde{M} ;\Z)$. Notice that $\partial^{\Delta}_2 \circ c_2 \circ \partial_3 (\mathcal{O}_M )= c_1 \circ \partial_2 \circ\partial_3 (\mathcal{O}_M ) =0$, that is, $c_2 \circ \partial_3 (\mathcal{O}_M ) $ is a 2-cycle. If we expand $c_2 \circ \partial_3 (\mathcal{O}_M ) $ as $\sum n_i (g_0^i, g_1^i, g_2^i)$ for some $ n_i \in \Z, g_j^i \in G $, then with a choice of $v_0 \in G$ such that $v_0 $ is different from all $ g_k^\ell$, $\mathcal{O}_M ':=- \sum n_i (v_0, g_0^i, g_1^i, g_2^i)$ satisfies $\partial_3^{\Delta }(\mathcal{O}_M ') = c_2 \circ \partial_3 (\mathcal{O}_M ) $. Therefore, the correspondence $\mathcal{O}_M \mapsto \mathcal{O}_M ' $ gives rise to a chain map $c_3 : C_*(\widetilde{M} ) \ra C_*(G ) $, as desired. Here, $c_3$ up to homotopy is independent of the choice of $v_0.$ In conclusion, the class $ \mathcal{O}_M ' \otimes 1 $ in $ C_3(G) \otimes \Z $ is a representative of the pushforward $ \rho_* [M]$.

In our experience, if $M$ is hyperbolic, fortunately, the 3-class $ \mathcal{O}_M ' $ often lies in the subcomplex $C_n^{h \neq } (\SL_2(\C) ) $. Thus, it is not so hard to compute numerically the pairing $ \langle 2 \widehat{C}_2,\rho_* [M]\rangle \in \C /\Z$. As stated in Proposition \ref{ll2433}, for instance, if $M=M_{p/1}(K)$ and $|p| <8$ and $\mathrm{cr}(K)<8$, we can easily compute the pairings numerically.

\normalsize

\vskip 1pc

\normalsize

\noindent 

Department of Mathematics, Tokyo Institute of Technology 2-12-1 Ookayama, Meguro-ku Tokyo 152-8551 Japan

E-mail address: {\tt nosaka@math.titech.ac.jp}


\begin{thebibliography}{99} 

 

\bibitem[BGZ]{BGZ} Benini, Francesco; Gang, Dongmin; Pando Zayas, Leopoldo, {\it A. Rotating black hole entropy from M5-branes}. J. High Energy Phys. 2020, no. 3, 057, 39 pp. MR4090080 


 

 


\bibitem[CS]{CS} Cheeger Jeff and Simons James. {\it Differential characters and geometric invariants}. In Geometry and topology 
(College Park, Md., 1983/84), volume 1167 of Lecture Notes in Math., 50--80. 

 

\bibitem[CGK]{CGK} 
Cho Gil Young, Gang Dongmin, and Kim Hee-Cheol. {\it M-theoretic genesis of topological phases},  
JHEP, 11:115, 2020. arXiv:2007.01532 


 


 


\bibitem[CQW]{CQW} S. X. Cui, Y. Qiu, and Z. Wang, {\it From Three Dimensional Manifolds to Modular Tensor 
Categories,} arXiv:2101.01674 [math.QA]. 


 


 

\bibitem[DZ]{DZ} Dupont, Johan L.; Zickert, Christian K. {\it A dilogarithmic formula for the Cheeger-Chern-Simons class}. Geom. Topol. 10 (2006), 1347--1372. 

 


 


 


 

 

\bibitem[Mu]{HM} Murakami, Hitoshi. {\it Optimistic calculations about the Witten-Reshetikhin-Turaev invariants of closed three-manifolds obtained from the figure-eight knot by integral Dehn surgeries}. Surikaisekikenkyusho Kokyuroku No. 1172, (2000), 70--79. 

 


 

 

\bibitem[MOO]{MOO} 
Murakami, Hitoshi; Ohtsuki, Tomotada; Okada, Masae. {\it Invariants of three-manifolds derived from linking matrices of framed links}. Osaka J. Math. 29 (1992), no. 3, 545--572. 

 

 


 




\bibitem[No]{Nos2}Nosaka, Takefumi. {\it Cellular chain complexes of universal covers of some 3-manifolds}. J. Math. Sci. Univ. Tokyo 29 (2022), no. 1, 89--113. 


 


 

 


 

 

\bibitem[O]{Oht2} Ohtsuki, Tomotada. {\it On the asymptotic expansion of the quantum $SU (2)$ invariant at $q = \exp(4\pi \sqrt{-1}/N )$ for closed hyperbolic 3-manifolds obtained by integral surgery along the figure-eight knot}. In: Algebraic \& 
Geometric Topology 18.7 (2018), pages 4187--4274 

 

 

\bibitem[R]{Riely} Riley, Robert. {\it Parabolic representations of knot groups}. I. Proc. London Math. Soc. (3) 24 (1972), 217--242. 

 

 

\bibitem[Y]{Yoon}Yoon, Seokbeom. {\it A vanishing identity of adjoint Reidemeister torsions of twist knots}. Algebr. Geom. Topol. 22 (2022), no. 1, 227--249. 

 


 

 

\bibitem[Z1]{Zic} 
Zickert, Christian K. {\it The extended Bloch group and algebraic $K$-theory}. J. Reine Angew. Math. 704 (2015), 21--54.  

 

\bibitem[Z2]{Zic2} Zickert, Christian K. , a private communication.  



 

 

 


 


 

 

\end{thebibliography}
\end{document}